\documentclass[12pt]{article}
\usepackage[english]{babel}
\usepackage{amsfonts, dsfont, amsmath,amssymb,amsthm,mathrsfs, amsbsy}
\usepackage{fullpage}
\usepackage{graphicx,graphics}
\usepackage[dvipsnames]{xcolor}
\usepackage{epsfig}
\usepackage{latexsym}
\usepackage[applemac,utf8]{inputenc}
\usepackage[colorlinks=true]{hyperref}
\usepackage{enumitem}
\usepackage[toc,page]{appendix}
\usepackage{bbm}
\usepackage{stmaryrd}
\usepackage{comment}
\usepackage{pgf,tikz}
\usepackage{enumitem}
\usepackage{mathtools}
\usepackage{ulem} 
\usepackage{textcomp}
\usepackage[cal=euler]{mathalfa}
\usepackage[mono=false]{libertine}
\useosf
\usepackage[letterpaper,top=2cm,bottom=2cm,left=2cm,right=2cm,marginparwidth=1.5cm]{geometry}
\usepackage[font={small,sf}, labelfont={sf,bf}, margin=1cm]{caption}
\usepackage{csquotes}

\usepackage{savesym}
\savesymbol{addspace}
\usepackage{musixtex}
\restoresymbol{MS}{addspace}

\usepackage[%
backend=biber,
hyperref=true,
style=ext-alphabetic,
url=true,
isbn=false,
giveninits=true,
maxnames=3,
backref=false,
date=year,
abbreviate=true,
eprint=true,
articlein=false,
maintitleaftertitle=true,
language=english
]{biblatex}
\usepackage[auto]{contour}
\usetikzlibrary{shapes, angles, decorations.text, patterns, fadings,decorations.pathreplacing,fit}

\renewcommand{\geq}{\geqslant}
\renewcommand{\leq}{\leqslant}

\newtheorem{theorem}{Theorem}
\newtheorem*{theorem*}{Theorem}

\newtheorem{proposition}{Proposition}[section]
\newtheorem{lemma}{Lemma}

\theoremstyle{definition}

\theoremstyle{remark}

\theoremstyle{definition}

\date{}

\title{ 
\bf 
\begin{music}\trebleclef\end{music} \hspace{1em}
    \begin{minipage}{.6\linewidth}
        \centering
        Sweet Trims are made of Threes\\
        \normalsize \bf A c\`adl\`ag erasure of the Brownian tree
    \end{minipage}
\hspace{1em}\raisebox{.3em}{♪}\hspace{.2em}\raisebox{-.2em}{♪}\hspace{.2em}\raisebox{.2em}{♪} }

\author{Alessandra Caraceni\thanks{Scuola Normale Superiore di Pisa, \url{alessandra.caraceni@sns.it}}, \hspace{0.1cm} Nicolas Curien\thanks{Universit\'e Paris-Saclay, \href{mailto:nicolas.curien@universite-paris-saclay.fr;william.fleurat@universite-paris-saclay.fr;adrianus.twigt@universite-paris-saclay.fr}{\tt firstname.lastname@universite-paris-saclay.fr} }, \hspace{0.1cm} William Fleurat\footnotemark[2] \hspace{0.3cm}\&\hspace{0.1cm} Adrianus Twigt\footnotemark[2] }

\begin{document}
\maketitle

\vspace{-1em}
\begin{abstract} 

We present a simple trimming algorithm that generates nested uniform binary plane trees by removing leaves one-by-one using a best-of-three-match procedure. While its one-step transition specializes to the \L uczak--Winkler \& Caraceni--Stauffer coupling, its scaling limit provides a suprising  c\`adl\`ag erasure of Brownian trees, reminiscent of SLE theory.
\end{abstract}

\begin{figure}[h!]
    \begin{center}
        \includegraphics[width=1\linewidth]{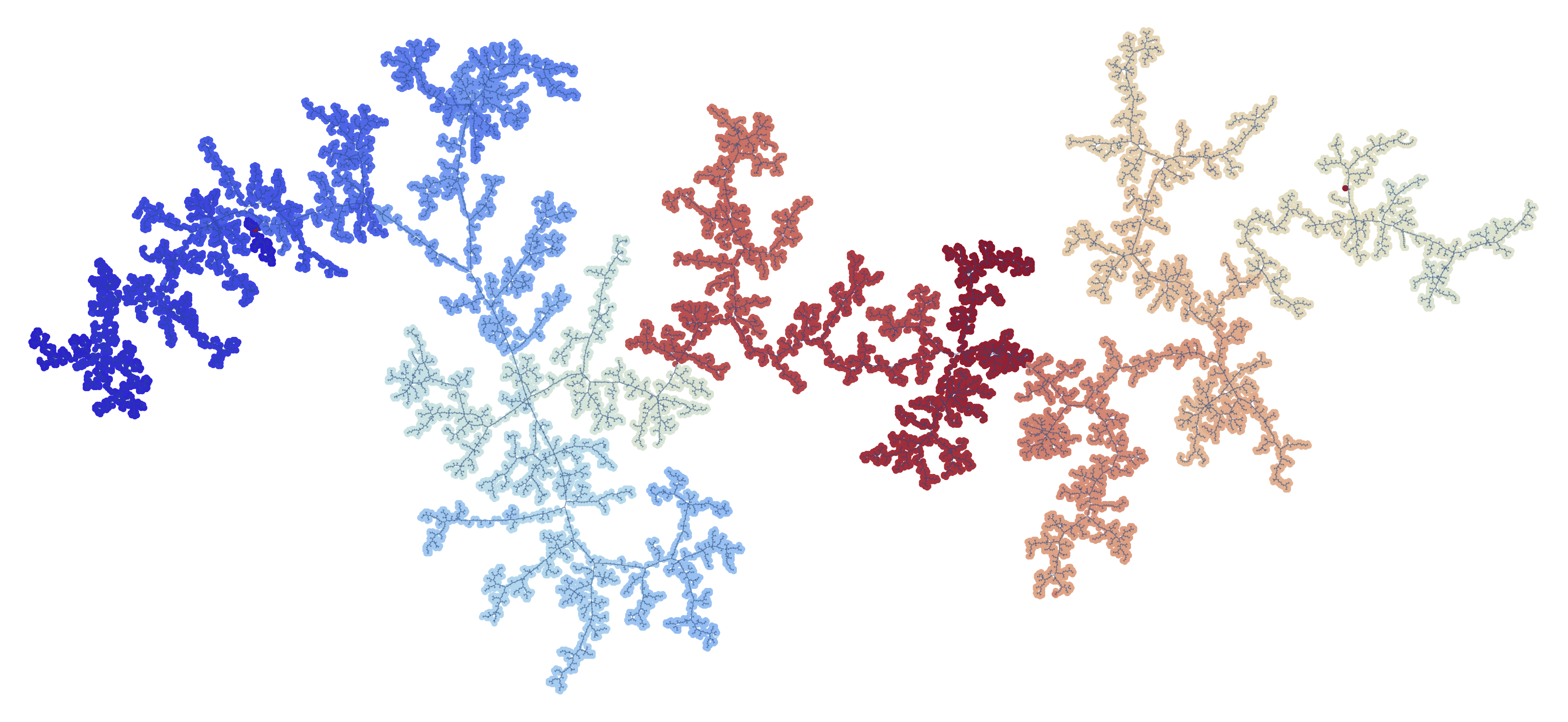}
    \end{center}
    \caption{A Brownian Continuum Random tree, with its vertices colored by time of erasure, from blue to red. See \href{https://www.imo.universite-paris-saclay.fr/~nicolas.curien/videos/erasure.mp4}{the corresponding video.}}
\end{figure}

\clearpage

\section{Introduction}
Trimming (and growing) algorithms to generate uniform random binary trees $(T_n : n \geq 0)$ have recently been proved to be particularly useful to study random structures \cite{LPS,CS23,addario2014growing} and to prove conditioned theorems \cite{fleurat2025tauberian}. Perhaps the most famous is Rémy's algorithm \cite{remy1985procede} which is a key tool in the field. A natural question, first addressed by \L uczak \& Winkler \cite{LW} is to ask for a coupling which is \emph{monotone}, in the sense that $T_{n-1}$ is obtained from $T_n$ by pruning leaves. Such algorithms have been explicited and studied in \cite{caraceni2020polynomial,caraceni2024random}. We provide below a simple algorithm in the case of binary trees that not only reproves and explains the {\L}uczak--Winkler \& Caraceni--Stauffer one-step construction, but also passes to a continuous coupling in the scaling limit: a continuous proces of Brownian Continuum Random Trees (CRT)~\cite{aldous1991continuum} that grow through a single point --- reminiscent of SLE-type growth. The algorithm is so simple that we can start its complete description right away.

In these pages, by \textbf{\textit{binary tree}} we mean a plane tree  $t$ whose vertices have either degree one (the \textit{leaves}) or degree three (the \textit{branching nodes}) and rooted at a leaf. 
Interpreting the root as the ancestor of a  genealogical tree, we shall use freely family  notions such as parent, descendant, sibling, \textit{etc}. The \textbf{size} $|t|$ of $t$ is its number of branching nodes, equivalently its number of leaves plus two.
A \textbf{\textit{labeled binary tree}} $\boldsymbol{t}=(t,\ell)$ is a binary tree $t$ together with a bijective labeling $v\mapsto\ell(v)$ of its leaves by $\{0,1,\dots,|t|+1\}$ such that the root has label $0$.
Given a binary tree $t$ and $v$ one of its nodes, the \textbf{\textit{fringe set}} $t(v)$ is the set of strict descendants of $v$ in $t$, that is the set of vertices which are disconnected from the root in $t\setminus\{v\}$. We then define $\mathrm{Cut}(t;v)=t\setminus t(v)$, which is again a binary tree, in which $v$ has been turned into a leaf. This cutting operation extends to labeled trees $\boldsymbol t=(t,v)$ by relabeling as follows: first, the new leaf $v$ gets assigned the minimal label in $t(v)$, and the leaves are renumbered increasingly by $\{0,1,\dots,|\mathrm{Cut}(t;v)|+1\}$.

Given a labeled binary tree $\boldsymbol t=(t,\ell)$, its \textbf{\textit{best-of-three erasure}} is the tree $\mathrm{Cut}(\boldsymbol t,v_*)$ where the node $v_*$ is chosen as follows.
Start at the branching node attached to the root leaf, and iteratively, suppose we are at a given ``active'' branching node $v$.
If $t(v)$ does not contain any branching node, then set  $v_*=v$.
Otherwise, among the three leaves with minimal labels in $t(v)$, at least two are descendants of the same child of $v$, itself a branching node, to which we move. See Figure \ref{fig:best-of-three-explained} for an illustration.

\begin{figure}[h!]
    \centering
    \includegraphics[width=0.8\linewidth]{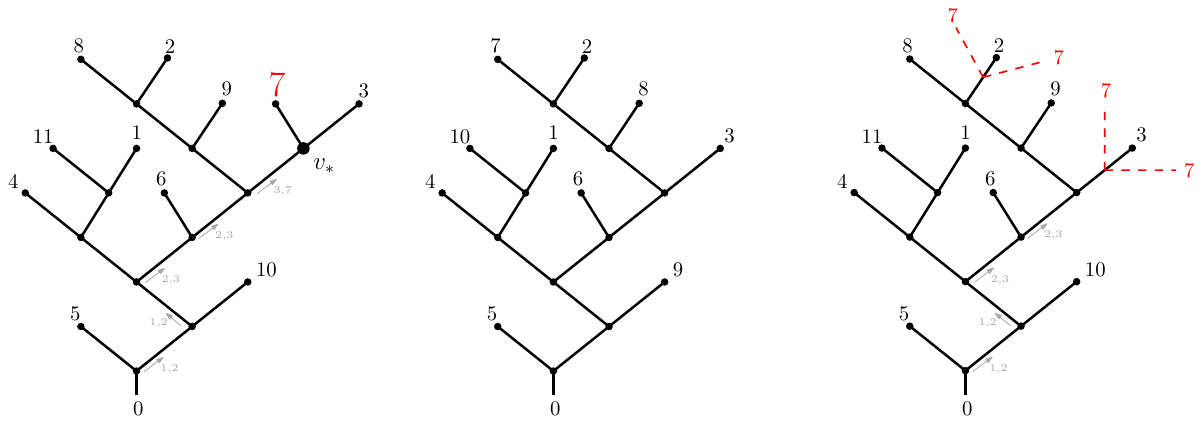}
    \caption{
        Left: On this example, the BoT leaf is labeled $7$. The gray arrows depict the moves to chose $v_*$. Center: the tree obtained by cutting at $v_*$ and relabeling. Right: The four different locations to re-insert a leaf labeled $7$.
    \label{fig:best-of-three-explained}}
    \label{fig:placeholder}
\end{figure}

By construction, $t(v_*)$ does not contain branch points: thus it is a pair of leaves. Hence $\mathrm{Cut}(\boldsymbol t,v_*)$ removes these two leaves, while $v_*$ inherits the minimal label between these two leaves. The other label is denoted by $\mathrm{BoT}(\boldsymbol t)$, encoding the \textbf{Best-of-Three leaf} (BoT leaf) that has been effectively erased. An important observation is that \textit{the sibling of the BoT leaf has a smaller label} than $\mathrm{BoT}(\boldsymbol t)$. Note also that the number of leaves (or the number of nodes) reduces by exactly one.  


\begin{proposition}[{A bijective explanation of Remark 2.5 in \cite{caraceni2024random}}]\label{prop:sports_question} Let $n\geq 2$. If \,$\boldsymbol{T}_n=(T_n, \ell_n)$ denotes a uniformly random labeled binary tree of size $n$, then the tree $ \boldsymbol{T}_{n-1}=(T_{n-1}, \ell_{n-1})$ obtained by best-of-three erasure is a uniformly random labeled binary tree of size $n-1$.
\end{proposition}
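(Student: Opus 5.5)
The plan is a counting argument: we show that every labeled binary tree of size $n-1$ has the same number of preimages under best-of-three erasure, and that this number equals the ratio of the two cardinalities.

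\textbf{The target count.} There are $\mathrm{Cat}_n\,(n+1)!$ labeled binary trees of size $n$. Indeed, there are $\mathrm{Cat}_n$ binary trees with $n$ branching nodes, and the $n+1$ non-root leaves receive the labels $\{1,\dots,n+1\}$ bijectively. Hence
\[
\frac{\mathrm{Cat}_n\,(n+1)!}{\mathrm{Cat}_{n-1}\,n!}=\frac{2(2n-1)}{n+1}\,(n+1)=4n-2 .
\]
So it suffices to prove that every labeled tree $\boldsymbol t'$ of size $n-1$ has exactly $4n-2$ preimages $\boldsymbol t$ of size $n$ with $\mathrm{Cut}(\boldsymbol t,v_*)=\boldsymbol t'$.

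\textbf{Parametrizing candidate preimages.} Fix $\boldsymbol t'$. A candidate preimage is specified by three pieces of data:
\begin{itemize}[label=$\cdot$]
\item the erased label $k=\mathrm{BoT}(\boldsymbol t)\in\{1,\dots,n+1\}$; labels $\geq k$ in $\boldsymbol t'$ are shifted up by one;
\item a non-root leaf $u$ of $\boldsymbol t'$, which becomes the branching node $v_*$;
\item the left/right order of the two new children of $u$. One child carries the label of $u$ and the other carries $k$.
\end{itemize}
The observation that the sibling of the BoT leaf has a smaller label forces $\ell(u)<k$. Conversely, such data gives a true preimage exactly when the best-of-three walk in the enlarged tree $\boldsymbol t$ ends at $u$. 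Cutting at $u$ then recovers $\boldsymbol t'$ automatically, since $u$ inherits $\min(\ell(u),k)=\ell(u)$ and the relabeling undoes the shift. The problem therefore reduces to counting, for each $k$, the pairs $(u,\text{orientation})$ for which the walk terminates at $u$.

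\textbf{The key step.} The decision at each node $v$ depends only on the three smallest labels in $t(v)$. Inserting the label $k$ changes these three smallest labels only at ancestors of $u$, and only where $k$ is among the three smallest. I would therefore run the walk on $\boldsymbol t'$ restricted to the labels $<k$, and locate where the walk must go so that its path reaches $u$ with $u$'s two new leaves as the final pair. Concretely, at each node on the path, the majority among the three minimal labels (now possibly including $k$) must point toward $u$. I expect this to leave only a bounded number of admissible $u$ for each $k$. The guess, suggested by Figure~\ref{fig:best-of-three-explained}, is: $2$ admissible leaves times $2$ orientations, giving $4$ for each $k\in\{2,\dots,n\}$, and a degenerate count for the extreme values of $k$, so that the total is $4(n-1)+2=4n-2$. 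Checking whether $k=1$ or $k=n+1$ carries the leftover $2$ is part of this computation.

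\textbf{Main obstacle.} The main obstacle is this case analysis of the walk when $k$ enters the top-three set at some ancestor. I would first treat large $k$, where $k$ is never among the three minima and the walk is unchanged. I would then handle the boundary cases, in which the first node where $k$ ranks among the three smallest labels is just above two small-labeled leaves. In those cases one reads off directly which leaves $u$ (with $\ell(u)<k$) keep the walk heading toward $u$. If the per-$k$ count turns out not to be constant, the fallback is to sum over $k$ directly: pair each admissible leaf $u$ with the range of $k$ that it admits.
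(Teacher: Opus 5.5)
\textbf{Genuine gap: the fiber count is never carried out, and the guessed per-$k$ counts are wrong.} Your reduction is correct and matches the paper's. The ratio $\mathrm{Cat}_n(n+1)!/(\mathrm{Cat}_{n-1}\,n!)=4n-2$ is right. A preimage of $\boldsymbol t'$ is exactly a triple (erased label $k$, non-root leaf $u$ with $\ell(u)<k$, orientation) for which the walk in the enlarged tree ends at $u$. But counting these triples for each $\boldsymbol t'$ is the whole content of the proposition, and you only conjecture it.

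\begin{itemize}
\item Your own constraint $\ell(u)<k$, together with $\ell(u)\geq 1$, already rules out $k=1$.
\item For $k=2$, $u$ must be the leaf labeled $1$, and both orientations work. Labels $1,2$ are the two global minima, so at every ancestor of $u$ they are among the three smallest and lie in the child toward $u$. That gives $2$, not $4$.
\item $k=n+1$ is not degenerate: it gives $4$, like every $k\in\{3,\dots,n+1\}$.
\item There is no regime in which $k$ stays out of the three minima. To step from the parent $p$ of $u$ into $u$, whose fringe is just the leaves labeled $\ell(u)$ and $k$, both labels must be among the three smallest in $t(p)$.
\end{itemize}

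\textbf{The missing idea}, which handles all $k\geq 3$ uniformly, is to run the walk in $\boldsymbol t'$ only while the fringe of the active vertex contains at least three labels $<k$.
\begin{itemize}
\item At such a vertex the three minima are all $<k$. Inserting $k$ changes nothing there, so the walks in $\boldsymbol t'$ and $\boldsymbol t$ make the same move.
\item Each such move lands on a child containing at least two labels $<k$. Hence the walk reaches a first vertex $w_0$ whose fringe contains exactly two leaves $v,v'$ with labels $<k$. This $w_0$ is the root's branching node when $k=3$, and the cut vertex $v_*$ of $\boldsymbol t'$ itself when $k=n+1$.
\item The walk in $\boldsymbol t$ passes through $w_0$, so a preimage needs $u\in t'(w_0)$. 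Then $\ell(u)<k$ forces $u\in\{v,v'\}$.
\item Conversely, take $u\in\{v,v'\}$ and either orientation. In any fringe from $w_0$ down to the parent of $u$, the labels $\leq k$ lie in $\{\ell(v),\ell(v'),k\}$. So $\ell(u)$ and $k$ are among the three minima and form a majority in the child toward $u$. The walk therefore ends at $u$ and erases $k$.
\end{itemize}
This yields exactly $4$ preimages for each $k\geq 3$, hence $0+2+4(n-1)=4n-2$ in total. This is precisely the paper's argument.
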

\begin{proof} 
    We shall prove in fact that the mapping $ (t_n, \ell_n) \longmapsto (t_{n-1}, \ell_{n-1})$
is $(4n-2)$-to-$1$, which gives the result. Suppose that  the BoT erasure of $\boldsymbol t_n$ yields $\boldsymbol t_{n-1}$, and set $j=\mathrm{BoT}(\boldsymbol t_n)$. Then one gets $\boldsymbol t_n$ back by: (i) relabeling increasingly by $\{1,\dots,n+1\}\setminus\{j\}$, giving some $\boldsymbol t'_{n-1}$, and (ii) grafting a leaf with label $j$ to the left or right of some \textit{allowed} leaf of $\boldsymbol t'_{n-1}$, as depicted in Figure \ref{fig:best-of-three-explained}. We claim that the number of allowed grafting locations is~$0$ when $j\in\{0,1\}$, it is $2$ when $j=2$, and lastly it is $4$ when $j\in\{3,\dots,n+1\}$ --- giving as claimed that $\boldsymbol t_{n-1}$ has $0+2+4(n-1)=4n-2$ preimages.

Recall that the BoT leaf in $\boldsymbol t_n$ is sibling to a leaf with a smaller label. This readily rules out the case $j\in\{0,1\}$, and shows that when the BoT leaf has label $2$ it must be a sibling to $1$, giving two grafting locations to reconstruct --- the left and right of the leaf labeled $1$. In the remaining case $j\in\{3,\dots,n+1\}$, the moves to choose $v_*$ are as follows: either the descendants of the active vertex contain at least $3$ labels  in $\{1,\dots,j-1\}$, in which case the next chosen vertex is the same  if one follows the BoT construction in either $\boldsymbol t_{n}$ or $\boldsymbol t'_{n-1}$ (this is because the renumbering and grafting only concern leaves with label larger than $j-1$); or there are exactly two leaves $v,v'$ of $\boldsymbol t'_{n-1}$ with labels in $\{1,\dots,j-1\}$ above the active vertex. Since the BoT leaf of $\boldsymbol t_n$ has label $j$ and is sibling to a leaf with a smaller label, we deduce that $\boldsymbol t_{n}$ must have been obtained by grafting a leaf with label $j$ at the left or right of either $v$ or $v'$. This gives $4$ possible grafting locations. They all give $\boldsymbol t_{n-1}$ under BoT erasure: the grafting creates a pair of leaves with two of the three minimal labels above the active vertex, forcing to cut at their parent, so that the BoT leaf has the larger label between the two, namely $j$ as needed.
\end{proof}

By coherence, we get a chain $( \boldsymbol{T}_k : k \geq 0)$ of uniformly random labeled binary trees with of size $k$ such that $\boldsymbol{T}_{k-1}$ is the BoT erasure of $\boldsymbol{T}_k$ for all $k \geq 1$. Our main result is a scaling limit for this chain.
See~\cite{le2005random} for background on the Gromov--Hausdorff metric $d_{\mathrm{GH}}$ and the Brownian CRT $\mathcal T$ --- that we normalize so that $n^{-1/2}\cdot T_n\to \mathcal T$, that is $\mathcal T$ is $\sqrt 2\cdot\mathcal T_{\mathrm{Aldous}}$ defined in~\cite{aldous1991continuum}. We consider the metric $D(X,Y)=\sup_t d_{\mathrm{GH}}(X_t,Y_t)$ to state a convergence of a \textit{process} of (isometry classes of) metric spaces.

\begin{theorem}[A c\`adl\`ag erasure of the Brownian CRT] \label{thm:main} Let $ (\mathcal{T},d)$ be a Brownian CRT. There is a.s.~a c\`adl\`ag space filling function $\xi:[0,1] \to \mathcal{T}$ such that for all $t \in[0,1]$, the range $\overline{\xi[0,t]}  =: \mathcal{T}_t\subset \mathcal{T}$ is a closed subtree of \,$\mathcal{T}$ having the same law as $\sqrt{t} \cdot \mathcal{T}$. Furthermore we have the
convergence in distribution, with respect to the metric $D$,
    $$ \left( \frac{1}{ \sqrt{n}} T_{[nt]} : t \in [0,1]\right) \xrightarrow[n \to \infty]{(d)} \left( \mathcal{T}_t\right)_{t \in [0,1]}.$$
\end{theorem}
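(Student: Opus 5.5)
We plan to build the continuum object first, via labels, and then transfer to the discrete chain. The key structural remark is that the BoT erasure is entirely driven by the \emph{relative order} of labels. Reading the chain backwards, $\boldsymbol T_k$ grows from $\boldsymbol T_{k-1}$ by inserting a leaf of label $j$ (uniform among the allowed values, by the $(4n-2)$-to-$1$ count in Proposition~\ref{prop:sports_question}) next to one of the two lowest-labelled leaves in a specific fringe. Instead of tracking this dynamics, we use a static description. Take $\boldsymbol T_n$ uniform and give each leaf of label $i$ a \emph{removal time}, the $k$ at which it is erased going down the chain. We want to show that the subtree $T_{[nt]}$ is, up to $o(\sqrt n)$ in Gromov--Hausdorff distance, the subtree of $T_n$ spanned by leaves with labels at most $nt$. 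In a uniform labelled tree, labels are exchangeable uniform marks on leaves, so this spanned subtree converges after rescaling to the subtree of $\mathcal T$ spanned by a Poisson/uniform sample of intensity $t$.

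That naive guess is false, however: the surviving leaf sets are not initial segments of labels, because relabeling by minima shuffles them. The cleaner route uses a consistent family. By Proposition~\ref{prop:sports_question} and Kolmogorov extension, the chain $(\boldsymbol T_k)_{k\ge0}$ exists as a projective system. For fixed $k$, the sequence $(n^{-1/2}T_n)$, and hence the joint law of $(T_{[nt_i]})_i$, is a tree-growth process. The plan is the following.

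\textbf{Step 1 (finite-dimensional marginals).} For fixed times $t_1<\dots<t_m$, show that $(n^{-1/2}T_{[nt_i]})_i$ converges jointly to nested subtrees $\mathcal T_{t_1}\subset\dots\subset\mathcal T_{t_m}=\mathcal T$. We would encode each $T_{[nt]}\subset T_n$ as the subtree spanned by a set $L_t$ of leaves of $T_n$. The goal is to show that the points of $\mathcal T$ where the erasure acts are determined by the labels, via the best-of-three rule: at each branching point, go towards the side containing two of the three smallest labels. In the continuum, labels become i.i.d.\ uniform marks on a Poisson sample of leaves, and the "three smallest labels" decision at a branch point is decided by the first three sampled leaves in that fringe. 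This yields a recursive (Rémy-like / stick-breaking) description of $\mathcal T_t$ inside $\mathcal T$. The marginal identity $\mathcal T_t\overset{d}{=}\sqrt t\,\mathcal T$ is then immediate from Proposition~\ref{prop:sports_question} and $n^{-1/2}T_{[nt]}\to\sqrt t\,\mathcal T$.

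\textbf{Step 2 (the function $\xi$).} In $T_n$, let $x_k$ be the parent location of the leaf erased at step $k$. Define $\xi_n(k/n)=x_{k}$ rescaled. Since the BoT descent goes through nodes whose fringes keep the two smallest labels, the points $x_k$ accumulate along a single branch. We would show that the rescaled $\xi_n$ are tight in the Skorokhod topology and that any limit is space filling with $\overline{\xi[0,t]}=\mathcal T_t$. The jumps correspond to the times when the descent switches to a new subtree, i.e.\ when a fringe is exhausted. These are the discontinuities, consistent with "càdlàg" and the SLE-type growth through a single point.

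\textbf{Step 3 (tightness in $D$).} Monotonicity gives $d_{GH}(T_{[ns]},T_{[nt]})\le d_H$ inside $T_n$. We need control of $\sup_{s<t<s+\delta}$ of the Hausdorff distance between consecutive subtrees, uniformly except at finitely many macroscopic jumps. A nested family whose marginals converge and whose limit is monotone and right-continuous upgrades to convergence for $D$ by a standard argument. We would discretize times and use monotonicity and the continuity of $t\mapsto\mathcal T_t$ in probability at fixed $t$. We expect Step 3 to be the main obstacle, together with identifying the limit descent in Step 1: we must show that the descent in the continuum is well defined, meaning that branch points with fewer than three marks do not accumulate, and that no mass is lost. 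For this we would try exchangeability plus the explicit $4$-choice insertion rule to compute the law of the height of the erasure point $x_k$, and bound fluctuations via a martingale.
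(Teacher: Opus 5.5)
There is a genuine gap. You never isolate the structural fact that lets the erasure order pass to the limit, namely the compatibility and nesting property of Lemma~\ref{lem:compatibility}.

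Your Step~1 asserts that in the continuum each decision is ``decided by the first three sampled leaves in that fringe''. But the discrete erasure of $\boldsymbol T_n$ depends on all $n+2$ labels and on the successive relabelings (a cut node inherits the minimal label of its fringe). That assertion is therefore exactly what has to be proved. Two things are needed:
\begin{itemize}
\item[(a)] restricted to the branch points of the $\ell$-span $\boldsymbol t_n^\ell$, the order $\prec_{\boldsymbol t_n}$ coincides with the BoT order of $\boldsymbol t_n^\ell$;
\item[(b)] every branch point erased strictly between $\mathrm b_i^\ell$ and $\mathrm b_{i+1}^\ell$ lies in $t_n(\mathrm b_{i+1}^\ell)\setminus\bigcup_{j\le i}t_n(\mathrm b_j^\ell)$.
\end{itemize}
The paper proves this by revealing one label at a time. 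The leaf labelled $\ell+1$ can only influence a best-of-three comparison once it is in the majority among the three minimal labels above the active vertex. That happens exactly when its attaching branch point is about to be cut, after which one is back to the tree of the $\ell$-span dynamics. Without (a) and (b) you cannot identify the limiting descent, which you yourself list as unresolved.

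Property (b) is also what your proposal lacks for the time parametrization. Combined with ``descendants are cut before ancestors'', it shows that the branch points erased up to $\mathrm b_i^\ell$ are exactly those in $\bigcup_{j\le i}\big(T_n(\mathrm b_j^\ell)\cup\{\mathrm b_j^\ell\}\big)$. Hence $T_{n-\theta^{(n)}(\mathrm b_i^{\ell,n})}=T_n\setminus\bigcup_{j\le i}T_n(\mathrm b_j^{\ell,n})$. By the Curien--Haas Gromov--Hausdorff--Prokhorov convergence of $(n^{-1/2}T_n,\text{marked leaves},\mu_n)$, the rescaled erasure time then converges to $\theta(\mathrm b_i^\ell)=\mu\big(\bigcup_{j\le i}\mathcal T(\mathrm b_j^\ell)\big)$.

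This makes your Step~2 unnecessary. You need neither Skorokhod tightness of $\xi_n$ nor martingale control of the height of the erasure point. Instead, $\xi$ is defined directly as the c\`adl\`ag extension of $\theta^{-1}$. The c\`adl\`ag modulus comes from the fact that each increment $\mathcal T(\mathrm b_i^\ell)\setminus\bigcup_{j<i}\mathcal T(\mathrm b_j^\ell)$ consists of two components of $\mathcal T\setminus\mathbb B(\mathcal T^\ell)$, whose masses and diameters go to $0$ uniformly as $\ell\to\infty$.

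Finally, Step~3 misplaces the discontinuities. The tree-valued process $t\mapsto\mathcal T_t$ has no jumps: since $\overline{\xi[0,t]}$ is connected, a jump of $\xi$ must land in $\overline{\xi[0,t)}$. This continuity is exactly what convergence for the uniform metric $D$ requires; upgrading finite-dimensional convergence to $D$-convergence needs a continuous limit, not just a right-continuous one. The same small-diameter control of the increments, together with nestedness, upgrades convergence at the checkpoint times $\theta(\mathrm b_i^\ell)$ to uniform convergence.
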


\textbf{Motivation}: Our discrete construction specializes without pain to the coupling of binary trees of size $n+1$ and $n$ discovered by {\L}uczak \& Winkler \cite{LW} and explicited by Caraceni \& Stauffer \cite{caraceni2020polynomial}. More precisely, the choice of the leaf to erase  considered in \cite{caraceni2024random} is precisely the erasure of best-of-three leaf for an independent uniformly random labeling of the tree, see in particular Remark 2.5 ``A sports question" in \cite{caraceni2020polynomial}. However, in the above references the labeling is resampled at each step to produce the chain. This gives a chain of nested trees but where the erased leaf at step $n$ and $n-1$ are typically far from each other and whose scaling limits have been constructed in \cite{curien2025growingselfsimilarmarkovtrees} (in the much general framework of self-similar Markov trees \cite{bertoin2024self}) --- there, trees grow globally through countably many points. In a sense, the coupling of the present work can be thought of as the ``best possible'' growing process of labeled binary trees through their leaves. A similar phenomenon has recently been uncovered for Rémy's algorithm: if the leaf to erase is resampled at each step then the sequence of renormalized trees converge almost surely, while if they are coupled in the best possible way \cite{BBJ17}, this yields to a non-trivial diffusion \cite{CurienMarzouk26}. This paper is one part of a bigger research effort aimed at building from the discrete various tree-valued diffusion processes with Brownian CRT as invariant measures see \cite{CurienMarzouk26,curien2025growingselfsimilarmarkovtrees,FPRW23,evans2006rayleigh,LMW20}.

\medskip \noindent \textbf{Acknowledgments:}  The last three authors are supported by ``SuperGrandMa", the  ERC Consolidator Grant No 101087572. 

\section{Proof of Theorem \ref{thm:main}}

The proof of our main result is straightforward after an important observation in the discrete setting: the order of erasure is well-compatible with its approximation using only the $k$ smallest labels. 
To state it formally, let us introduce some terminology and notation. 

Fix  $\boldsymbol{t}_n=(t_n, \ell_n)$ a labeled binary tree of size $n$ and write $(\boldsymbol{t}_k : 0 \leq k \leq n)$ for the labeled binary trees obtained by successive BoT erasures. 
We let $(\mathrm{b}_{k},1\leq k \leq n)$ denote the branching nodes at which we successively cut --- in the notation of the preceding section, $\mathrm{b}_k$ is the branching node $v_*$ used in the BoT erasure of $\boldsymbol{t}_{n-k+1}$ to obtain $\boldsymbol{t}_{n-k}$. All in all, the sequence $(\mathrm{b}_{k},1\leq k \leq n)$ describes a total order $\prec_{\boldsymbol{t}_n}$ on the set $\mathbb{B}(t_n)$ of branching nodes of $t_n$,
$$ \mathrm{b}_1 \prec_{\boldsymbol{t}_n} \mathrm{b}_2 \prec_{\boldsymbol{t}_n} \cdots \prec_{\boldsymbol{t}_n} \mathrm{b}_n,$$
where the smallest branching node is the first erased, see Figure \ref{fig:compatibility} for an illustration.

We now consider \textbf{spanned subtrees}: for $2\leq \ell\leq n+1$, the $\ell$-span of $\boldsymbol t_n$, denoted by $\boldsymbol t_n^{\ell}$, is the union of the paths between the leaves with labels in $\{0,1,\dots,\ell\}$. This gives a \textbf{labeled unary-binary tree}, see Figure~\ref{fig:compatibility} --- these generalize labeled binary trees by allowing vertices of degree two. 
The definition of \textit{BoT erasure} readily extends to these trees  by treating the non-branching nodes (degree $2$) as fictitious. Again, repeated BoT erasure gives a total order $ \mathrm{b}_1^\ell \prec_{\boldsymbol{t}_n^{\ell}} \mathrm{b}_2^\ell \prec_{\boldsymbol{t}_n^{\ell}} \cdots \prec_{\boldsymbol{t}_n^{\ell}} \mathrm{b}_{\ell-1}^\ell$ on the set $\mathbb B(t_n^\ell)$ of branching nodes of $t_n^\ell$.
Note that the branching nodes $ \mathbb{B}(t_n^\ell)$ form a (generally disconnected) subset of $\mathbb{B}(t_n)$.
In fact, the order $\prec_{\boldsymbol{t}_n^{\ell}}$ refines the order $\prec_{\boldsymbol{t}_n}$, as seen in the following lemma.


\begin{figure}[h!]
    \centering
    \includegraphics[width=0.9\linewidth]{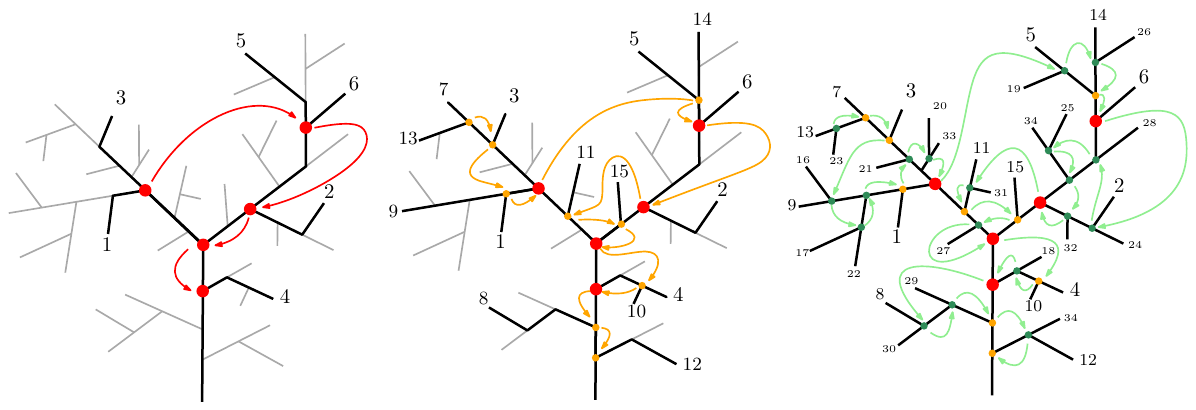}
    \caption{Illustration of the compatibility of order or erasure in spanned subtrees. Left: In the unary-binary tree (in bold) spanned by the root and the leaves $1,2,3,4,5,6$, the successive removal of the branch points is displayed in red. Middle and Right: Refining the order of erasure by revealing more leaves. \label{fig:compatibility}}
    
\end{figure}


\begin{lemma}[Compatibility] \label{lem:compatibility} For all  $2 \leq \ell \leq n$ and $ 1 \leq i \leq \ell-2$, we have:
    \begin{itemize}
    \item  $\mathrm{b}_i^\ell \prec_{\boldsymbol{t}_n} \mathrm{b}_{i+1}^\ell$ \textsc{(Compatibility with $\prec_{\boldsymbol{t}_n}$)},
    \item  any branch point $\mathrm{b} \in \mathbb{B}(t_n)$ lying in the interval $\mathrm{b}_i^\ell \prec_{\boldsymbol{t}_n} \mathrm{b} \prec_{\boldsymbol{t}_n} \mathrm{b}_{i+1}^\ell$ must belong to the subtree 
$$ \mathrm{b} \in t_n(\mathrm{b}_{i+1}^\ell) \backslash \left(\bigcup_{1 \leq j \leq i} t_n(\mathrm{b}_{j}^\ell) \right) \qquad \mbox{\textsc{(nesting)}}.$$
\end{itemize}
\end{lemma}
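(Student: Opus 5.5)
We prove both items at once by following the $\ell$-span along the whole erasure sequence $\boldsymbol t_n,\boldsymbol t_{n-1},\dots$. The key point is to track the leaves that \emph{originally} carried labels in $\{0,\dots,\ell\}$, not the leaves currently labeled $\le \ell$. Two facts make this possible. First, a BoT erasure relabels increasingly. Second, the new leaf $v_*$ inherits the minimal label of the removed pair. Hence relative order is preserved, and one can follow each original leaf label through the successive trees by letting it pass to $v_*$ whenever it is the smaller label of the cut pair.

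Let $S_k$ denote the resulting set of "small" leaves in $\boldsymbol t_{n-k}$, and let $\sigma_k$ be the unary-binary tree they span, with the induced labeling. Then $\sigma_0=\boldsymbol t_n^\ell$. The goal is the following \emph{claim}: at each step $k\to k+1$, either $\sigma_{k+1}=\sigma_k$ (same branching nodes), or $\sigma_{k+1}$ is exactly the BoT erasure of $\sigma_k$. Moreover, in every case the cut node $v_*$ of $\boldsymbol t_{n-k}$ lies in the fringe $t_{n-k}(\mathrm b)$, where $\mathrm b$ is the node that BoT erasure of $\sigma_k$ would cut.

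To prove the claim, compare the walks of active vertices in $\boldsymbol t_{n-k}$ and in $\sigma_k$. Suppose the active vertex $v$ of $t_{n-k}$ lies on $\sigma_k$ and has at least $3$ small leaves below it. Then the three minimal labels of $t_{n-k}(v)$ are all small, so the walk moves into the child containing two of them, and that child is again on $\sigma_k$. At a degree-$2$ node of $\sigma_k$ this is the unique span direction, hence the node is fictitious as required. At a genuine branching node of $\sigma_k$ it is the same move as BoT in $\sigma_k$. Therefore the walk in $t_{n-k}$ reaches $\mathrm b$, the first branching node of $\sigma_k$ with exactly two small leaves $a<b$ below it, and afterwards it stays inside $t_{n-k}(\mathrm b)$.

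Now let $j$ be the erased label. If $j$ is not small, the small leaves and their span are untouched, apart from possibly shortening non-span branches, so $\sigma_{k+1}=\sigma_k$. If $j$ is small, then, since the sibling of the BoT leaf carries a smaller label, the sibling is also small. The only small leaves in $t_{n-k}(\mathrm b)$ are $a$ and $b$, so the removed pair is $\{a,b\}$, the cut node is $\mathrm b$ itself, and $j=b$. This is precisely the BoT erasure of $\sigma_k$.

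Iterating the claim, the branching nodes of $\boldsymbol t_n^\ell$ are cut in $\boldsymbol t_n$ in the order $\mathrm b_1^\ell,\mathrm b_2^\ell,\dots$, which gives compatibility. For nesting, take any $\mathrm b$ cut strictly between $\mathrm b_i^\ell$ and $\mathrm b_{i+1}^\ell$. At the time of its cut the span is $\sigma$ after $i$ erasures, whose next node is $\mathrm b_{i+1}^\ell$, so $\mathrm b\in t_n(\mathrm b_{i+1}^\ell)$, using that fringes only shrink along the sequence. Also $\mathrm b\notin t_n(\mathrm b_j^\ell)$ for $j\le i$, since those vertices were already deleted when $\mathrm b_j^\ell$ was cut. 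The main obstacle is the bookkeeping in the claim: making the tracking of original labels and the degree-$2$ nodes of the span precise, and checking that the first-step analysis really applies unchanged at every later step.
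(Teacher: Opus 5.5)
Your proof is correct and, for the first item, takes a different route from the paper.

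\textbf{The paper's route.} It proves compatibility by induction on the span, passing from $\boldsymbol t_n^\ell$ to $\boldsymbol t_n^{\ell+1}$. It locates the single new branching node created by revealing leaf $\ell+1$. Through the case analysis (i)--(iii), it argues that this leaf cannot affect any comparison until the erasure reaches its parent. It concludes that the new node is simply inserted into the old order, and chaining up to the full tree gives the claim. The direct comparison ``the BoT walk in $\boldsymbol t_n$ follows the walk in what remains of $\boldsymbol t_n^\ell$ until it reaches $\mathrm b_{i+1}^\ell$'' is used only for the nesting item.

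\textbf{Your route.} You make that comparison the whole proof. Tracking the originally small leaves gives a precise meaning to ``what remains of $\boldsymbol t_n^\ell$'', namely your $\sigma_k$, which the paper leaves informal. One invariant then yields both items at once: $\sigma$ is either unchanged or BoT-erased, and $v_*$ lies below $\sigma$'s next cut node.

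\textbf{What each buys.} The paper's induction makes explicit the one-node-at-a-time refinement $\prec_{\boldsymbol t_n^\ell}\to\prec_{\boldsymbol t_n^{\ell+1}}$, which is what the coherence of $\theta$ across different $\ell$ relies on. Your argument avoids the rather loose case analysis. It also does not seem to use that the ambient tree is all of $t_n$ rather than a larger span, so it should give that refinement as well.

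\textbf{Points to patch.}
\begin{itemize}
\item Your walk comparison only treats active vertices with at least three small leaves below. The walk must also cross the degree-$2$ nodes of $\sigma_k$ on the stretch leading down to $\mathrm b$, from its nearest $\sigma_k$-branching ancestor or from the root. These nodes have exactly two small leaves $a,b$ below them, and when $|S_k|=3$ this stretch is the entire walk. At such a node the three minimal labels are $a$, $b$ and one large label. Since $a$ and $b$ lie in the same child, they form the majority whatever the large label is, so the walk still follows the span. Add this one line.
\item When $j$ is large but its sibling is small, the small label migrates to $v_*$. This shortens a span branch, not a non-span one. It is harmless because $v_*$ was a degree-$2$ node of $\sigma_k$, so the branching nodes and relative labels are unchanged.
\item The cut node lies in $\{\mathrm b\}\cup t_{n-k}(\mathrm b)$, not in the strict fringe, since $v_*=\mathrm b$ whenever $j$ is small.
\end{itemize}
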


\begin{proof}

It suffices to show that the order is compatible when passing from $t_n^\ell$ to $t_n^{\ell + 1}$. Denote by $\mathrm{b} \in t_n^{\ell + 1}$ the \textit{branching} node that arises when revealing the leaf labeled $\ell + 1$ in $t_n^{\ell+1}$. As was already noted in the proof of Proposition~\ref{prop:sports_question}, the leaf labeled $\ell + 1$ only changes the outcome of a best of three comparison if it is among the majority of the $3$ leaves with smallest labels in the fringe set of an active vertex. The label $\ell+1$ being the largest in $t_n^{\ell + 1}$, this will not happen until
\begin{enumerate}[label=(\roman*)]
    \item the path to the next BoT leaf passes through the parent $\mathrm{b}_k^{\ell} \in t_n^\ell$ of $\mathrm{b}$ for some $1 \leq k \leq \ell$,
    \item the fringe set $t_n^{\ell + 1}(\mathrm{b}_k^\ell)$ is reduced to having only $1$ branching node $\mathrm{b}$ and $3$ leaves,
    \item the fringe set $t_n^{\ell + 1}(\mathrm{b})$ is reduced to having only $2$ leaves, among which the leaf labeled $\ell + 1$.
\end{enumerate}
Note that if the parent of $\mathrm{b}$ is $\mathrm{b}_1^\ell$, then $\mathrm{b}$ is erased first and if the parent of $\mathrm{b}$ is the root, then it is erased last. In any other case, the presence of $\ell + 1$ does not change the $\prec_{\boldsymbol{t}_n^{\ell + 1}}$-order of the $\mathrm{b}_j^\ell$ for $j \leq k - 1$. Only after the erasure of $\mathrm{b}_{k-1}^\ell$ does the above situation arise, in which case the leaf labeled $\ell + 1$ and thereby $\mathrm{b}$ are erased next. At this point we recover exactly the same tree as after $k - 1$ steps in the best-of-three erasure for $t_n^\ell$. Hence the order also remains unchanged for $j \geq k$, from which the first claim of the lemma follows. As for the second claim, let $1\leq i\leq \ell-2$ and let $\mathrm{b} \in \mathbb{B}(t_n)$ be in the interval $\mathrm{b}_i^\ell \prec_{\boldsymbol{t}_n} \mathrm{b} \prec_{\boldsymbol{t}_n} \mathrm{b}_{i+1}^\ell$.
On the one hand, since ancestors are erased after their descendants, $\mathrm{b} \notin \left(\bigcup_{1 \leq j \leq i} t_n(\mathrm{b}_{j}^\ell) \right)$.
On the other hand, when $\mathrm{b}_{i}^\ell$ has been cut while $\mathrm{b}_{i+1}^\ell$ has not yet been cut, the BoT comparisons lead to $\mathrm{b}_{i+1}^\ell$ in what remains of $\boldsymbol t_n^\ell$, but the same is true in $\boldsymbol t_n$ --- possibly continuing higher up after climbing to $\mathrm{b}_{i+1}^\ell$ --- since larger labels do not impact BoT comparisons beyond the three minimal labels above an active vertex. Hence, nodes which are cut (strictly) between $\mathrm{b}_{i}^\ell$ and $\mathrm{b}_{i+1}^\ell$ belong to $t_n(\mathrm{b}_{i+1}^\ell)$.  In particular, $\mathrm b$ does. 
%
%
%
\end{proof}

\paragraph{Construction of the limit process} Start with a Brownian CRT $\mathcal{T}$ of mass $1$, and conditionally on it, sample a countable collection $(X_i : i \geq 0)$ of iid  points in $\mathcal{T}$ distributed according to the uniform mass measure $\mu$. Those points are a.s.~all leaves and $X_0$ will be seen as the root of $\mathcal{T}$ while $X_i$ will be interpreted as the leaves labeled $1,2, ... $. This collection of points enables us to define an order on the branch points $\mathbb{B}(\mathcal{T})$ of $ \mathcal{T}$. Recall that a branch point $x \in \mathbb{B}(\mathcal{T})$ is a point such that $\mathcal{T} \backslash \{x\}$ has three connected components. It is well-known that there are a.s. countably many such points in $\mathcal{T}$ and that they are obtained as the union of all branch points yielding to the $(X_i : i \geq 0)$'s. More precisely, for each $\ell \geq 2$ there are exactly $\ell-1$ branch points belonging to the tree $\boldsymbol{\mathcal{T}}^\ell$ spanned by $\{X_0, X_1, ... , X_\ell\}$ and the union of those points over $\ell$ is precisely $\mathbb{B}(\mathcal{T})$. Extending the discrete definition, one can consider the BoT-removal order $\prec_{\boldsymbol{\mathcal{T}}^\ell}$ on $\mathbb{B}(\boldsymbol{\mathcal{T}}^\ell)$, and for any $ \mathrm{b} \in \mathbb{B}(\boldsymbol{\mathcal{T}}^\ell)$, we define its \textbf{reverse-time} as the  total $\mu$-measure of the subtrees erased before $\mathrm{b}$:

\begin{equation}
    \label{def:theta}
    \theta(\mathrm{b}) \quad := \quad \mu \left( \bigcup_{ \mathrm{b}' \preceq_{\boldsymbol{\mathcal{T}}^\ell} \mathrm{b}} \mathcal{T}( \mathrm{b}')\right),
\end{equation} where, as in the discrete setting, $\mathcal{T}(x)$ denotes the fringe set above the point $x$ in $\mathcal{T}$. Using  Lemma \ref{lem:compatibility} it is seen that the above definition is coherent in the sense that if $ \mathrm{b} \in \mathbb{B}(\mathcal{T}^{\ell}) \cap  \mathbb{B}(\mathcal{T}^{\ell'})$ then both definitions of $\theta( \mathrm{b})$ for $\ell,\ell'$ agree. Thus $\theta$ is defined on $\mathbb{B}(\mathcal{T})$ with values in $[0,1]$.

\begin{proposition}\label{prop:extension}A.s.~the inverse mapping $\theta^{-1} : [0,1] \to \mathbb{B}(\mathcal{T})$ can be uniquely extended into a c\`adl\`ag space filling function over $\mathcal{T}$ such that for any $t\in [0,1]$ the closed subset $\overline{\theta^{-1}[1-t,1]} := \mathcal{T}_t$ is a closed subtree of $\mathcal{T}$.
\end{proposition}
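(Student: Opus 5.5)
We first establish the basic properties of $\theta$ on $\mathbb{B}(\mathcal{T})$, and then extend by monotone limits.

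\textbf{Step 1: $\theta$ is injective and order preserving.} By Lemma~\ref{lem:compatibility} (passed to the continuum via the finite spans $\boldsymbol{\mathcal{T}}^\ell$), the orders $\prec_{\boldsymbol{\mathcal{T}}^\ell}$ are compatible. Hence they define a total order $\prec$ on $\mathbb{B}(\mathcal{T})$, and $\theta$ is non-decreasing along $\prec$.

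For strictness, take $\mathrm{b}\prec\mathrm{b}'$ in some $\mathbb{B}(\boldsymbol{\mathcal{T}}^\ell)$. The set $\mathcal{T}(\mathrm{b}')$ contains two subtrees above $\mathrm{b}'$. At least one of them is not contained in $\bigcup_{\mathrm{b}''\preceq \mathrm{b}}\mathcal{T}(\mathrm{b}'')$: otherwise, by the nesting part of Lemma~\ref{lem:compatibility}, $\mathrm{b}'$ would already have been erased. Every nondegenerate subtree of the CRT has positive $\mu$-mass, so $\theta(\mathrm{b})<\theta(\mathrm{b}')$.

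\textbf{Step 2: the image of $\theta$ is dense in $[0,1]$.} Fix $\ell$ and consider the successive values $0<\theta(\mathrm{b}^\ell_1)<\dots<\theta(\mathrm{b}^\ell_{\ell-1})=1$. The increments are $\mu$-masses of the pieces $\mathcal{T}(\mathrm{b}^\ell_{i+1})\setminus\bigcup_{j\le i}\mathcal{T}(\mathrm{b}^\ell_j)$. Each such piece is a union of at most two fringe subtrees hanging off the span $\boldsymbol{\mathcal{T}}^\ell$ near $\mathrm{b}^\ell_{i+1}$, together with the subtrees hanging off a segment of $\boldsymbol{\mathcal{T}}^\ell$.

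Since the $X_i$ are dense in $\mathcal{T}$, the maximal $\mu$-mass of a connected component of $\mathcal{T}\setminus \boldsymbol{\mathcal{T}}^\ell$ tends to $0$. Uniform continuity of $\mu$ restricted to segments of the span handles the segment contributions. Hence $\max_i(\theta(\mathrm{b}^\ell_{i+1})-\theta(\mathrm{b}^\ell_i))\to0$.

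I expect this step to be the main obstacle. The erased pieces are controlled only through the nesting statement, so one must check carefully that the branch points of $\mathcal{T}$ inside one gap are exactly those in that piece. One also needs the a.s.\ fact that the mass hanging off short segments is small. I would handle this with the standard estimate that $\max$ of $\mu$ over components of $\mathcal{T}\setminus\boldsymbol{\mathcal{T}}^\ell$ vanishes, obtained via the contour function.

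\textbf{Step 3: define the extension.} Set $\xi(s):=\lim \theta^{-1}(s_k)$ along $s_k\downarrow s$ with $s_k$ in the image. For $u\le s$, the points $\theta^{-1}(u)$ lie in the closed set $\mathcal{T}_{1-s}$-type region. Using the tree structure, the points $\theta^{-1}(s_k)$ for $s_k\downarrow s$ form a Cauchy family.

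To see this, note that by nesting, all branch points with $\theta$ values in a small interval $(\theta(\mathrm{b}_i^\ell),\theta(\mathrm{b}_{i+1}^\ell)]$ lie in $\mathcal{T}(\mathrm{b}_{i+1}^\ell)$ minus earlier fringes. By Step 2, the diameter of this region tends to $0$ uniformly as $\ell\to\infty$. This uses compactness: the diameters of the components of $\mathcal{T}\setminus\boldsymbol{\mathcal{T}}^\ell$ tend to $0$.

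This gives existence of right limits, which defines $\xi$, and existence of left limits, which makes $\xi$ c\`adl\`ag. The same diameter bound gives uniqueness of a c\`adl\`ag extension, since the image of $\theta$ is dense.

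\textbf{Step 4: space filling and subtree property.}
\begin{itemize}
\item The image of $\xi$ is closed up to the countably many left limits. It contains the dense set $\mathbb{B}(\mathcal{T})$, so its closure is $\mathcal{T}$. Every point is a limit of branch points with convergent $\theta$ values along a subsequence, and is therefore attained as $\xi(s)$ or $\xi(s-)$.
\item For connectedness of $\overline{\theta^{-1}[1-t,1]}$: branch points not yet erased at reverse-time $1-t$ form an ancestrally closed set, because descendants are erased before ancestors. The geodesic from any such point to the root therefore stays in the set up to closure.
\end{itemize}
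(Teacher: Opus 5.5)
Your plan is the paper's. The nesting part of Lemma~\ref{lem:compatibility} confines the branch points erased between $\mathrm b_i^\ell$ and $\mathrm b_{i+1}^\ell$ to the piece $\mathcal T(\mathrm b_{i+1}^\ell)\setminus\bigcup_{j\le i}\mathcal T(\mathrm b_j^\ell)$. Uniform smallness of these pieces in mass gives density of the image of $\theta$, and uniform smallness in diameter gives the one-sided limits. Your injectivity argument and your ancestral-closure proof that $\mathcal T_t$ is a subtree are fine, and more explicit than the paper.

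\textbf{The uniform estimate is not established.} You rightly flag this as the crux. A piece is the union of two components of $\mathcal T\setminus\mathbb B(\mathcal T^\ell)$, that is, two entire edges of the span $\mathcal T^\ell$ together with everything hanging off them. Density of the $X_i$ only makes each single component of $\mathcal T\setminus\mathcal T^\ell$ small. It controls neither the lengths of the edges of $\mathcal T^\ell$ nor the total mass of the infinitely many components hanging along one edge. ``Uniform continuity of $\mu$ restricted to segments'' has no content, since $\mu$ charges no segment. In Step~3 you cite Step~2 for a diameter bound, but Step~2 only treats masses.

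The missing input is the one the paper invokes: a.s.\ $\mathbb B(\mathcal T)$ is dense in the skeleton. Given $\varepsilon>0$, pick $\ell_0$ such that all components of $\mathcal T\setminus\mathcal T^{\ell_0}$ have diameter $<\varepsilon$. Then pick $\ell_1\ge\ell_0$ such that the points of $\mathbb B(\mathcal T^{\ell_1})$ cut each of the finitely many edges of $\mathcal T^{\ell_0}$ into segments of length $<\varepsilon$; this is where density of branch points is used. For $\ell\ge\ell_1$, every edge of $\mathcal T^\ell$ has length $<\varepsilon$, so every component of $\mathcal T\setminus\mathbb B(\mathcal T^\ell)$ has diameter at most $3\varepsilon$. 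The mass bound then follows from the diameter bound, because $\sup_x\mu(B(x,r))\to0$ as $r\to0$ ($\mu$ is atomless and $\mathcal T$ is compact).

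\textbf{The direction of time in Step~3 is reversed.} For $u\in(\theta(\mathrm b_i^\ell),\theta(\mathrm b_{i+1}^\ell)]$, the point $\theta^{-1}(u)$ lies in the closed piece attached to $\mathrm b_{i+1}^\ell$. So on its image $\theta^{-1}$ is \emph{left}-continuous with right limits, and it genuinely jumps to the right. Suppose that after some $\mathrm b$ is cut, the BoT path enters, at a branch point $v$ below $\mathrm b$, the child subtree not containing $\mathrm b$. This is an event of positive probability already for $\mathrm b=\mathrm b_1^3$. Then $\lim_{u\downarrow\theta(\mathrm b)}\theta^{-1}(u)$ is at distance at least $d(\mathrm b,v)$ from $\mathrm b$.

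Consequently, your $\xi(s):=\lim_{s_k\downarrow s}\theta^{-1}(s_k)$ differs from $\theta^{-1}(s)=\mathrm b$ at such $s=\theta(\mathrm b)$. It is not an extension of $\theta^{-1}$, and on that event no extension of $\theta^{-1}$ is c\`adl\`ag in $s$. What your Cauchy argument does give, and what Theorem~\ref{thm:main} uses, is that $t\mapsto\theta^{-1}(1-t)$ extends uniquely to a c\`adl\`ag map, defined by taking limits from the left in $\theta$-time. The paper's c\`adl\`ag modulus has the mirror-image slip: its intervals $[t_i,t_{i+1})$ should be $(t_i,t_{i+1}]$.

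\textbf{Minor point.} $\theta(\mathrm b_{\ell-1}^\ell)=1-\mu(C)<1$, where $C$ is the component of $X_0$ in $\mathcal T\setminus\mathbb B(\mathcal T^\ell)$. This last gap is handled like the others.
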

\begin{proof} For $\ell \geq 2$, let us denote by $C_1^{\ell}, ... ,C^{\ell}_{2\ell-1}$ the connected components of $\mathcal{T} \backslash \mathbb{B}( \boldsymbol{\mathcal{T}^\ell})$. By standard properties of the Brownian CRT and of its mass measure, the set $\mathbb{B}(\mathcal{T})$ is dense in the skeleton of $\mathcal{T}$. It follows by compactness and the fact that $\mu$ has no atoms that 
\begin{equation} \label{eq:diam}
\lim_{\ell \to \infty} \sup_{1 \leq i \leq 2 \ell-1} \mathrm{Diam}_{\mathcal{T}}( C_i^{\ell})  = 0 \quad \mbox{and} \quad \lim_{\ell \to \infty} \sup_{1 \leq i \leq 2 \ell-1} \mu( C_i^{\ell})  = 0. \end{equation} Let $ (\mathrm{b}_i^\ell : 1 \leq i \leq \ell-1)$ be the branch points of $\mathbb{B}( \mathcal{T}^\ell)$ ranked via $\prec_{ \boldsymbol{\mathcal{T}}^\ell}$. Then, the nesting property of Lemma~\ref{lem:compatibility} entails that  $\mathcal{T}( \mathrm{b}_i^\ell) \backslash \bigcup_{1 \leq j < i} \mathcal{T}( \mathrm{b}_j^\ell)$ is composed of two components among the $(C_k^\ell)$'s. Hence,  \begin{equation} \label{eq:petitarbre} \lim_{\ell \to \infty} \sup_{1 \leq i \leq \ell-1}  \mu\left(\mathcal{T}( \mathrm{b}_i^\ell) \backslash \bigcup_{1 \leq j < i} \mathcal{T}( \mathrm{b}_j^\ell)\right) = 0 \quad \mbox{and} \quad \lim_{\ell \to \infty} \sup_{1 \leq i \leq \ell-1}  \mathrm{Diam}_{\mathcal{T}}\left(\mathcal{T}( \mathrm{b}_i^\ell) \backslash \bigcup_{1 \leq j < i} \mathcal{T}( \mathrm{b}_j^\ell) \right) = 0.\end{equation} We deduce from the first point that the values taken by $\theta$ on $\mathbb B(\mathcal T^\ell)$ form a subdivision of $[0,1]$ whose maximal gap shrinks to $0$ as $\ell\to\infty$. The image of $\theta : \mathbb{B}(\mathcal{T}) \to [0,1]$ is thus dense in $[0,1]$; and to prove the existence of the c\`adl\`ag extension $\theta^{-1}$, it suffices to check that it admits a c\`adl\`ag modulus, that is:
\begin{equation} \label{eq:cadladmodulus}\inf_{0=t_0<t_1<... < t_n=1 } \ \   \max_{0 \leq i \leq n-1} \ \  \max_{u,v \in [t_i,t_{i+1})} {d}_\mathcal{T}\big( \theta^{-1}(u),\theta^{-1}(v)\big) =0. \end{equation}
Using Lemma \ref{lem:compatibility} and the second point of \eqref{eq:petitarbre}, this follows by taking $t_i = \theta( \mathrm{b}_i^\ell)$ and then $\ell \to \infty$.  That $ \mathcal{T}_t$ is a closed subtree for all $t \in [0,1]$ and that $\theta^{-1}$ is space filling both follow from the similar facts in the discrete passed to the scaling limit. \end{proof}

\begin{proof}[Proof of Theorem \ref{thm:main}] If $(X^{(n)}_i: 0 \leq i \leq n+1)$ are the leaves of $ \boldsymbol{T}_n$, and $\mu_n$ the uniform probability measure on the leaves of $\boldsymbol{T}_n$, then by \cite[Theorem 5]{CurienHaas2012} we have 
$$ \left( \frac{1}{ \sqrt{n}} T_n ; \left(X_i^{(n)} \right)_{0 \leq i \leq n},\mu_n \right) \xrightarrow[n \to \infty]{(d)} ( \mathcal{T} ; (X_i)_{i\geq 0},\mu),$$ for the $n$-pointed Gromov-Hausdorff--Prokhorov topology. By Skorokhod embedding, let us suppose that this convergence is almost sure and takes place for the $n$-pointed Hausdorff and Prokhorov distances in some underlying compact metric space $(E,d)$.  This, together with standard a.s.~properties of $ \mathcal{T}$ show that for any $\ell \geq 2$, the branching points  as well as the tree erased up to those points converge towards their respective continuous counterpart. More precisely, let us denote by $ \mathrm{b}_1^{\ell,n} \prec_{ \boldsymbol{T}_n^\ell} \cdots \prec_{ \boldsymbol{T}_n^\ell}  \mathrm{b}_{\ell-1}^{\ell,n}$ the branch points of $ \boldsymbol{T}_n^\ell$ ranked by BoT-erasure and write $\theta^{(n)}( \mathrm{b}_i^{\ell,n})$ their erasing order in $\boldsymbol{T}_n$, meaning that if $\theta^{(n)}( \mathrm{b}_i^{\ell,n}) = k \in \llbracket 1, n-1 \rrbracket$, then the branch point $\mathrm{b}_i^{\ell,n}$ is exactly the $k$-th branch point to be erased in $\boldsymbol{T_n}$ (not in $\boldsymbol{T}_n^\ell$). Then, for each $1 \leq i \leq \ell- 1$ it follows from Hausdorff convergence that $\mathrm{b}_i^{\ell,n} \to  \mathrm{b}_i^\ell$ in $E$, and from Prokhorov convergence that $\theta^{(n)}(\mathrm{b}_i^{\ell,n}) \to \theta(\mathrm{b}_i^{\ell})$, almost surely as $n \to \infty$. By Lemma~\ref{lem:compatibility}, we get for all $1 \leq i \leq \ell-1$,
$$ \frac{1}{\sqrt{n}}T_{n- \theta^{(n)}( \mathrm{b}_i^{\ell,n})} = \frac{1}{\sqrt{n}}\left(T_{n} \backslash  \bigcup_{1 \leq j \leq i} T_n( \mathrm{b}_j^{\ell,n})\right) \xrightarrow[n \to \infty]{a.s.}  \mathcal{T} \, \backslash  \bigcup_{1 \leq j \leq i} \mathcal{T}( \mathrm{b}_j^{\ell}) = \mathcal{T}_{1- \theta( \mathrm{b}_i^\ell)},$$
where the last equality comes from Proposition \ref{prop:extension}. This proves the convergence of the rescaled process in Theorem \ref{thm:main} with $\xi(t) = \theta^{-1}(1-t)$ along a dense set of times $t$. The uniform control in \eqref{eq:petitarbre} is sufficient to upgrade to uniform convergence of the process. 
\end{proof}

\printbibliography

\end{document}